\begin{document}

\title{Block sampling Kaczmarz-Motzkin methods for consistent linear systems\thanks{This work was funded by the National Natural Science Foundation of China (No. 11671060) and the Natural Science Foundation Project of CQ CSTC (No. cstc2019jcyj-msxmX0267).}
}


\author{Yanjun Zhang  \and Hanyu Li    }


\institute{  \at
              College of Mathematics and Statistics, Chongqing University, Chongqing 401331, P.R. China\\
              \email{yjzhang@cqu.edu.cn; \  lihy.hy@gmail.com or hyli@cqu.edu.cn}           
           \and
           }

\date{Received: date / Accepted: date}

\maketitle

\begin{abstract}
The sampling Kaczmarz-Motzkin (SKM) method is a generalization of the randomized Kaczmarz and Motzkin methods. It first samples
some rows of coefficient matrix randomly to build a set and then makes use of the maximum violation criterion within this
set to determine a constraint. Finally, it makes progress by enforcing this single constraint.
 In this paper, on the basis of the framework of the SKM method and considering the greedy strategies,
we present two block sampling Kaczmarz-Motzkin methods for consistent linear systems. Specifically, we also first sample a subset of
rows of coefficient matrix and then determine an index in this set using the maximum violation criterion. Unlike the SKM method, in the rest of the block methods, we devise different greedy strategies to build index sets.
Then, the new methods make progress by enforcing the corresponding multiple constraints simultaneously.
Theoretical analyses demonstrate that these block methods converge at least as quickly as the SKM method, and numerical experiments show that, for the same accuracy, our methods outperform the SKM method in terms of the number of iterations and computing time.
\keywords{Block sampling Kaczmarz-Motzkin methods \and Greedy strategy \and Sampling Kaczmarz-Motzkin method \and Consistent linear systems}
 \subclass{65F10  \and  65F20}
\end{abstract}

\section{Introduction}
We consider the following consistent linear systems
\begin{equation}
\label{1}
Ax=b,
\end{equation}
where $A\in R^{m\times n}$ with $m\gg n $, $b\in R^{m}$, and $x$ is the $n$-dimensional unknown vector. As we know, the Kaczmarz method \cite{kaczmarz1} is a popular so-called row-action method for solving the systems (\ref{1}). In 2009, Strohmer and Vershynin \cite{Strohmer2009} proved the linear convergence of the randomized Kaczmarz (RK) method. 
Subsequently, many randomized Kaczmarz type methods were proposed for different possible systems settings; see for example \cite{Needell2010, Eldar2011, Completion2013, Completion2015, Dukui2019, Wu2020, Chen2020} and references therein.

Unlike the RK method which selects the working rows of $A$ according to some probability distribution, 
Motzkin method \cite{Agamon54,Motzkin54} 
employs a greedy strategy, i.e., the maximum violation criterion, to select the working row in each iteration. So, the method is also known as the Kaczmarz method with the ``most violated constraint control'' or ``maximal-residual control'' \cite{Petra2016,Nutini2016,Nutini2018}.  This greedy strategy makes the Motzkin method outperform the RK method in many cases. 
So, many analyses and applications of the Motzkin method were published recently; see for example \cite{haddock2019motzkin,rebrova2019sketching,li2020novel} and references therein.

The sampling Kaczmarz-Motzkin (SKM) method proposed in \cite{de2017sampling}  for solving linear feasibility problem is a combination of the RK and Motzkin methods. 
Its accelerated version 
was presented in \cite{morshed2020accelerated}, which introduces  Nesterov's acceleration scheme. In addition, an improved analysis of the SKM method was given in \cite{haddock2019greed}. 
The SKM method overcomes some drawbacks of the methods of RK and Motzkin. 
For example, the Motzkin method is expensive since it selects the index $i_k$ at the iteration $k$ by comparing the residual errors of all the constraints, and 
the RK method may make progress slowly since it doesn't employ a greedy strategy. 
Instead, the SKM method can select an index by just comparing the residual errors of part of constraints, and employs the maximum violation criterion. 
However,  the SKM method may be slow because it makes progress by enforcing only one constraint.  
Inspired by the block algorithms given in \cite{needell2014paved,needell2015randomized,Niu2020} which can accelerate the original ones, in this paper, we consider the block versions of the SKM method. 
Two greedy strategies are devised to determine the index sets for block iteration.  


The rest of this paper is organized as follows. In Section \ref{sec2}, some notation and preliminaries are given. Our methods 
and their convergences are discussed in Section \ref{sec3} and Section \ref{sec4}, respectively. Finally, we present the numerical results in Section \ref{sec5}.

\section{Notation and preliminaries}\label{sec2}
Throughout the paper, for a matrix $A$, $A_{(i)}$ and ${\rm R(A)}$ denote its $i$-th row (or $i$-th entry in the case of a vector) and column space, respectively. We denote the number of elements of a set $\mathcal{I}$ by $|\mathcal{I}|$ and let the positive eigenvalues of $A^{T}A$, where $(\cdot)^{T}$ denotes the transpose of a vector or a matrix, be always arranged in algebraically nonincreasing order:
$$\lambda_{\max }=\lambda_{1} \geq \lambda_{2}  \geq \cdots  \geq \lambda_{\min }>0.$$
To analyze the convergence of our new methods, the following fact will be used extensively later in the paper.
\begin{lemma}
\label{theorem1}(\cite{horn2012matrix})
Let $A \in R^{n\times n}$ be symmetric and $A_t \in R^{t\times t}$ be its principal submatrix. Then
$$\lambda_{n-t+i}(A) \leqslant \lambda_{i}\left(A_{t}\right) \leqslant \lambda_{i}(A), \quad i=1, \cdots, t.$$
\end{lemma}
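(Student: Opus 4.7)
The statement is the classical Cauchy interlacing theorem for the eigenvalues of a symmetric matrix and its principal submatrix, so the plan is to prove it via the Courant--Fischer min--max characterization of eigenvalues rather than by a direct spectral argument.

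First I would fix notation. Let $\mathcal{J}\subseteq\{1,\ldots,n\}$ with $|\mathcal{J}|=t$ be the index set defining $A_t$, let $V=\mathrm{span}\{e_j:j\in\mathcal{J}\}\subseteq\mathbb{R}^{n}$ be the corresponding coordinate subspace, and let $P_V$ denote the obvious isometric embedding $\mathbb{R}^{t}\hookrightarrow\mathbb{R}^{n}$ whose image is $V$. The crucial observation is that for every $y\in\mathbb{R}^{t}$ and $x=P_V y$, one has $x^{T}Ax=y^{T}A_t y$ and $x^{T}x=y^{T}y$, so the Rayleigh quotients of $A$ restricted to $V$ and of $A_t$ on $\mathbb{R}^{t}$ coincide.

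Next I would recall Courant--Fischer in two equivalent forms:
\begin{equation*}
\lambda_{i}(A)=\max_{\substack{S\subseteq\mathbb{R}^{n}\\ \dim S=i}}\ \min_{\substack{x\in S\\ x\neq 0}}\frac{x^{T}Ax}{x^{T}x}=\min_{\substack{S\subseteq\mathbb{R}^{n}\\ \dim S=n-i+1}}\ \max_{\substack{x\in S\\ x\neq 0}}\frac{x^{T}Ax}{x^{T}x},
\end{equation*}
and analogously for $A_t$ with $n$ replaced by $t$. To prove the upper bound $\lambda_{i}(A_t)\leq\lambda_{i}(A)$ I would use the first (max--min) form: any $i$-dimensional subspace $S$ of $\mathbb{R}^{t}$ lifts via $P_V$ to an $i$-dimensional subspace $P_V S\subseteq\mathbb{R}^{n}$, and the two Rayleigh quotients agree on it, so the max--min defining $\lambda_{i}(A_t)$ runs over a subfamily of subspaces considered in the max--min defining $\lambda_{i}(A)$, giving the inequality. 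For the lower bound $\lambda_{n-t+i}(A)\leq\lambda_{i}(A_t)$ I would switch to the second (min--max) form: $\lambda_{i}(A_t)$ is the min--max over $(t-i+1)$-dimensional subspaces of $\mathbb{R}^{t}$, which again lift to $(t-i+1)$-dimensional subspaces of $\mathbb{R}^{n}$, while $\lambda_{n-t+i}(A)$ is the min--max over all such subspaces in $\mathbb{R}^{n}$; restricting the minimization to the lifted family can only make the value larger, which yields exactly the claimed inequality.

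The only point that needs a little care is the correct bookkeeping of indices in the second step: one must verify that $n-(n-t+i)+1=t-i+1$ so that the two min--max formulas are over subspaces of matching codimension after restriction, and that the lifting $P_V$ preserves dimension and Rayleigh quotients. Beyond that, everything is a direct application of Courant--Fischer, so I do not expect any genuine obstacle; the whole argument fits in a few lines once the embedding $P_V$ is set up.
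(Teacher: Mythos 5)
Your proof is correct: the index bookkeeping $n-(n-t+i)+1=t-i+1$ checks out, the embedding $P_V$ preserves dimensions and Rayleigh quotients since $A_t=P_V^{T}AP_V$, and both inequalities follow as you describe. The paper does not prove this lemma at all---it simply cites Horn and Johnson---and your Courant--Fischer argument is precisely the standard proof of Cauchy interlacing given in that reference, so there is nothing further to compare.
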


In addition, to compare the SKM method and our new methods clearly, we list the SKM method from \cite{de2017sampling} in Algorithm \ref{alg1}.

\begin{algorithm}
\caption{ The SKM method} 
\label{alg1}
\begin{algorithmic}
\STATE{\mbox{Input:} ~ Matrix $A\in R^{m\times n}$, vector $b\in R^{m}$, parameter $\beta$, initial estimate $x_0$.}
\STATE{\mbox{Output:} ~Approximate $x$ solving $Ax=b$.}
\STATE{1. Choose a sample of $\beta$ constraints, $\tau_k$, uniformly at random from among the rows of $A$.}
\STATE{2. Set $t_k={\rm arg} \max \limits _{i\in \tau_k} ( b_{(i)}- A_{(i)} x_{k})^2  $.}
\STATE{3. Update
\begin{align}
  x_{k+1}=x_{k}+\frac{ b_{(t_{k})}-A_{(t_{k})}x_k}{ \| A_{\left(t_{k}\right)} \|_{2}^{2}}( A_{(t_{k})})^{T}.\notag
\end{align}}
\STATE{4. Repeat.}
\end{algorithmic}
\end{algorithm}

\section{The first block sampling Kaczmarz-Motzkin method }\label{sec3}
The first block sampling Kaczmarz-Motzkin (BSKM1) method is presented in Algorithm \ref{alg2}.  Compared with the SKM method, the main difference and key of the BSKM1 method is to devise the index set for updating the approximation. 
We mainly use 
the threshold value $\delta_k$ obtained from the small set $\tau_k$ which is the same as the one in Algorithm \ref{alg1} to build the index set. 



\begin{algorithm}
\caption{ The BSKM1 method}
\label{alg2}
\begin{algorithmic}
\STATE{\mbox{Input:} ~ Matrix $A\in R^{m\times n}$, vector $b\in R^{m}$, parameter $\beta$, initial estimate $x_0$.}
\STATE{\mbox{Output:} ~Approximate $x$ solving $Ax=b$.}
\STATE{1. Choose a sample of $\beta$ constraints, $\tau_k$, uniformly at random from among the rows of $A$.}
\STATE{2. Set $t_k={\rm arg} \max \limits _{i\in \tau_k} ( b_{(i)}- A_{(i)} x_{k})^2  $, and $\delta_k=\max \limits _{i\in \tau_k} ( b_{(i)}- A_{(i)} x_{k})^2$.}
\STATE{3. Determine the index set
\begin{align}
  \mathcal{I}_{k}=\{h_k|( b_{(h_k)}- A_{(h_k)} x_{k})^2\geq \delta_k;~h_k\in[m]/\tau_k\}\cup\{t_k\}.\notag
\end{align}}
\STATE{4. Update
\begin{align}
  x_{k+1}=x_{k}+A_{\mathcal{I}_{k}}^{\dagger}(b_{\mathcal{I}_{k}}-A_{\mathcal{I}_{k}}x_{k}).\notag
\end{align}}
\STATE{5. Repeat.}
\end{algorithmic}
\end{algorithm}

\begin{remark}

\label{rmkgl}
Note that if
$$( b_{(i_k)}- A_{(i_k)} x_{k})^2 =  \max \limits _{1 \leq i  \leq m}( b_{(i)}- A_{(i)} x_{k})^2 ,$$
then $i_k\in\mathcal{I}_{k}.$ So the index set $\mathcal{I}_{k}$ in Algorithm \ref{alg2} is always nonempty. 
\end{remark}
\begin{remark}
\label{rmkg2}
Compared with the SKM method, in each iteration,  the BSKM1 method can eliminate several large violated constraints control simultaneously. So, the BSKM1 method converges faster; see the detailed discussions following Theorem \ref{theorem2}. 
 In addition, if $\beta=m$, the BSKM1 method reduces to the Motzkin method. This means that the BSKM1 method can be also regarded as the block version of the  Motzkin method.
\end{remark}

Now, we provide the 
convergence of Algorithm \ref{alg2}.

\begin{theorem}
\label{theorem2}
From an initial guess $x_0\in{\rm R(A^T)}$, the sequence $\{x_k\}_{k=0}^\infty$ generated by the BSKM1 method converges linearly in expectation to the least-Euclidean-norm solution $x_{\star}=A^{\dag}b$ and
\begin{eqnarray*}
\textrm{E}\|x_{k+1}-x_\star\|^{2}_2\leq\left(1- \frac{\beta}{\xi_k}\frac{ |\mathcal{I}_{k}|}{ m}\frac{\lambda_{\min}(A^TA)}{\lambda_{\max} (A_{\mathcal{I}_{k}}^TA_{\mathcal{I}_{k}})}\right)\| x_{k}-x_\star \|^2_2,
\end{eqnarray*}
where
\begin{eqnarray}\label{1000}
\xi_k=\frac{\sum  \limits _{\tau_k \in \binom{m}{\beta}} \left \|A_{\tau_k} x_{k}- b_{\tau_k}\right\|_{2}^{2}}{\sum  \limits _{\tau_k \in \binom{m}{\beta}}  \left\|A_{\tau_k} x_{k}- b_{\tau_k}\right\|_{\infty}^{2}}.
\end{eqnarray}
\end{theorem}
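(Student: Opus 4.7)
The plan is to combine three ingredients: a projection identity for the block update, a Rayleigh-type lower bound on the action of the pseudoinverse, and an averaging identity that converts $\mathrm{E}\|A_{\tau_k}x_k-b_{\tau_k}\|_\infty^2$ into a multiple of $\|Ax_k-b\|_2^2$ via $\xi_k$. First I would argue by induction that $x_k-x_\star\in\mathrm{R}(A^T)$ for every $k$: the hypothesis $x_0\in\mathrm{R}(A^T)$ holds, and each increment $A_{\mathcal I_k}^{\dagger}(b_{\mathcal I_k}-A_{\mathcal I_k}x_k)$ lies in $\mathrm{R}(A_{\mathcal I_k}^T)\subseteq\mathrm{R}(A^T)$, while $x_\star=A^{\dagger}b\in\mathrm{R}(A^T)$. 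This containment will be needed later when we replace $\|Ax_k-b\|_2^2$ by a multiple of $\|x_k-x_\star\|_2^2$.

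Next, since $b_{\mathcal I_k}=A_{\mathcal I_k}x_\star$, the update rewrites as $x_{k+1}-x_\star=(I-A_{\mathcal I_k}^{\dagger}A_{\mathcal I_k})(x_k-x_\star)$, and $A_{\mathcal I_k}^{\dagger}A_{\mathcal I_k}$ is the orthogonal projector onto $\mathrm{R}(A_{\mathcal I_k}^T)$. The Pythagorean theorem then yields
\begin{equation*}
\|x_{k+1}-x_\star\|_2^2=\|x_k-x_\star\|_2^2-\|A_{\mathcal I_k}^{\dagger}A_{\mathcal I_k}(x_k-x_\star)\|_2^2.
\end{equation*}
Writing $y=x_k-x_\star$ and $Py=A_{\mathcal I_k}^{\dagger}A_{\mathcal I_k}y$, the identity $A_{\mathcal I_k}y=A_{\mathcal I_k}Py$ together with $\|A_{\mathcal I_k}Py\|_2\le \sigma_{\max}(A_{\mathcal I_k})\|Py\|_2$ gives the lower bound
\begin{equation*}
\|A_{\mathcal I_k}^{\dagger}A_{\mathcal I_k}(x_k-x_\star)\|_2^2\ge \frac{\|A_{\mathcal I_k}x_k-b_{\mathcal I_k}\|_2^2}{\lambda_{\max}(A_{\mathcal I_k}^TA_{\mathcal I_k})}.
\end{equation*}

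The construction of $\mathcal I_k$ is now decisive: every index $h\in\mathcal I_k$ satisfies $(b_{(h)}-A_{(h)}x_k)^2\ge\delta_k=\|A_{\tau_k}x_k-b_{\tau_k}\|_\infty^2$, hence $\|A_{\mathcal I_k}x_k-b_{\mathcal I_k}\|_2^2\ge |\mathcal I_k|\,\|A_{\tau_k}x_k-b_{\tau_k}\|_\infty^2$. Taking conditional expectation over the uniform choice of $\tau_k$ from $\binom{[m]}{\beta}$ and using both the definition of $\xi_k$ in (\ref{1000}) and the swap of sums
\begin{equation*}
\sum_{\tau_k\in\binom{m}{\beta}}\|A_{\tau_k}x_k-b_{\tau_k}\|_2^2=\binom{m-1}{\beta-1}\|Ax_k-b\|_2^2
\end{equation*}
(each row appears in exactly $\binom{m-1}{\beta-1}$ subsets), together with $\binom{m-1}{\beta-1}/\binom{m}{\beta}=\beta/m$, produces
\begin{equation*}
\mathrm{E}\|A_{\tau_k}x_k-b_{\tau_k}\|_\infty^2=\frac{\beta}{m\xi_k}\|Ax_k-b\|_2^2.
\end{equation*}
Finally, the range-space property implies $\|Ax_k-b\|_2^2=\|A(x_k-x_\star)\|_2^2\ge\lambda_{\min}(A^TA)\|x_k-x_\star\|_2^2$, and chaining the estimates gives the stated recursion, from which $\|x_k-x_\star\|_2\to 0$ in expectation follows.

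The main obstacle, and the point requiring the most care, is the treatment of the random quantities $|\mathcal I_k|$ and $\lambda_{\max}(A_{\mathcal I_k}^TA_{\mathcal I_k})$ which depend on $\tau_k$ and therefore cannot in general be pulled out of the expectation literally. To match the clean form of the statement one has to replace $|\mathcal I_k|$ by its smallest and $\lambda_{\max}(A_{\mathcal I_k}^TA_{\mathcal I_k})$ by its largest value over the admissible $\tau_k$ (the latter is in any case $\le\lambda_{\max}(A^TA)$ by the rank-one decomposition $A^TA-A_{\mathcal I_k}^TA_{\mathcal I_k}=\sum_{i\notin\mathcal I_k}A_{(i)}^TA_{(i)}\succeq 0$, which is the natural place to invoke Lemma \ref{theorem1}). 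Everything else is a bookkeeping combination of Pythagoras, a Rayleigh quotient bound, and the combinatorial averaging identity.
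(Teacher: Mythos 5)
Your proposal follows essentially the same route as the paper's proof: the projection identity plus Pythagoras, the Rayleigh-quotient bound $\|A_{\mathcal I_k}^{\dagger}A_{\mathcal I_k}y\|_2^2\ge\lambda_{\max}^{-1}(A_{\mathcal I_k}^TA_{\mathcal I_k})\|A_{\mathcal I_k}y\|_2^2$, the greedy lower bound $\|A_{\mathcal I_k}x_k-b_{\mathcal I_k}\|_2^2\ge|\mathcal I_k|\,\|A_{\tau_k}x_k-b_{\tau_k}\|_\infty^2$, the combinatorial averaging identity with $\binom{m-1}{\beta-1}/\binom{m}{\beta}=\beta/m$ and $\xi_k$, and finally Courant--Fischer on ${\rm R}(A^T)$. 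The one point where you go beyond the paper is your closing caveat about $|\mathcal I_k|$ and $\lambda_{\max}(A_{\mathcal I_k}^TA_{\mathcal I_k})$ being $\tau_k$-dependent and hence not literally extractable from the expectation: this is a legitimate concern, since the paper itself pulls the factor $|\mathcal I_k|/\lambda_{\max}(A_{\mathcal I_k}^TA_{\mathcal I_k})$ outside the sum over $\tau_k$ without comment, so your more careful reading (replacing these by their worst-case values over admissible $\tau_k$, or interpreting the stated bound with that convention) is the honest way to make the displayed inequality rigorous.
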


\begin{proof}
From Algorithm \ref{alg2}, using the fact $Ax_\star=b$, we have
\begin{eqnarray*}
x_{k+1}-x_\star&=&x_{k}-x_\star+A_{\mathcal{I}_{k}}^{\dagger}(b_{\mathcal{I}_{k}}-A_{\mathcal{I}_{k}}x_{k})
\\
&=& x_{k}-x_\star-A_{\mathcal{I}_{k}}^{\dagger}A_{\mathcal{I}_{k}}(x_{k}-x_{\star})
\\
&=&  (I- A_{\mathcal{I}_{k}}^{\dagger}A_{\mathcal{I}_{k}} )( x_{k}-x_\star).
\end{eqnarray*}
Since $A_{\mathcal{I}_{k}}^{\dagger}A_{\mathcal{I}_{k}}$ is an orthogonal projector, taking the square of the Euclidean norm on both sides and using the Pythagorean theorem, we get
\begin{align}
\|x_{k+1}-x_\star\|^{2}_2
&=   \|(I- A_{\mathcal{I}_{k}}^{\dagger}A_{\mathcal{I}_{k}} )( x_{k}-x_\star)\|^{2}_2 \notag
\\
&=  \|   x_{k}-x_\star \|^{2}_2- \| A_{\mathcal{I}_{k}}^{\dagger}A_{\mathcal{I}_{k}} ( x_{k}-x_\star)\|^{2}_2,  \notag
\end{align}
which together with the Courant-Fisher theorem:
\begin{align}
\|Ax\|^2_2\geq\lambda_{\min}\left( A^{T} A\right)\|x\|^2_2  \label{2}\ \textrm{is valid for any vector}\ x \in {\rm R(A^T)},
\end{align}
 and the fact $\lambda_{\min} ((A_{\mathcal{I}_{k}}^{\dagger})^TA_{\mathcal{I}_{k}}^{\dagger})=\lambda_{\max}^{-1}(A_{\mathcal{I}_{k}}^TA_{\mathcal{I}_{k}})$ yields
\begin{align}
\|x_{k+1}-x_\star\|^{2}_2
&\leq  \|   x_{k}-x_\star \|^{2}_2-\lambda_{\min} ((A_{\mathcal{I}_{k}}^{\dagger})^TA_{\mathcal{I}_{k}}^{\dagger})\| A_{\mathcal{I}_{k}} ( x_{k}-x_\star)\|^{2}_2\notag
\\
&= \| x_{k}-x_\star \|^{2}_2-\lambda_{\max}^{-1}(A_{\mathcal{I}_{k}}^TA_{\mathcal{I}_{k}})\| A_{\mathcal{I}_{k}} ( x_{k}-x_\star)\|^{2}_2 \notag
\\
&= \| x_{k}-x_\star \|^{2}_2-\lambda_{\max}^{-1}(A_{\mathcal{I}_{k}}^TA_{\mathcal{I}_{k}})\sum \limits _{i_k\in \mathcal{I}_{k}}(A_{(i_k)} x_{k}- b_{(i_k)})^2. \notag
\end{align}
On the other hand, from Algorithm \ref{alg2}, if $i_k\in \mathcal{I}_{k}$, we have
\begin{align}
( A_{(i_k)} x_{k}- b_{(i_k)})^2
&\geq \delta_k  =\max \limits _{i\in \tau_k} (  A_{(i)} x_{k}-b_{(i)})^2 = \|A_{\tau_k} x_{k}- b_{\tau_k}\|_{\infty}^{2}.  \notag
\end{align}
Then
\begin{align}
\|x_{k+1}-x_\star\|^{2}_2
&\leq \| x_{k}-x_\star \|^{2}_2-\frac{|\mathcal{I}_{k}|}{\lambda_{\max} (A_{\mathcal{I}_{k}}^TA_{\mathcal{I}_{k}})} \|A_{\tau_k} x_{k}- b_{\tau_k}\|_{\infty}^{2}.   \notag
\end{align}
Now, taking expectation of both sides (with respect to the sampled $\tau_k$), we have
\begin{align}
\textrm{E}\|x_{k+1}-x_\star\|^{2}_2
&\leq\| x_{k}-x_\star \|^{2}_2- \textrm{E}\frac{|\mathcal{I}_{k}|}{\lambda_{\max} (A_{\mathcal{I}_{k}}^TA_{\mathcal{I}_{k}})} \|A_{\tau_k} x_{k}- b_{\tau_k}\|_{\infty}^{2} \notag
\\
&= \| x_{k}-x_\star \|^{2}_2- \sum  \limits _{\tau_k \in \binom{m}{\beta}} \frac{1}{\binom{m}{\beta}}\frac{|\mathcal{I}_{k}|}{\lambda_{\max} (A_{\mathcal{I}_{k}}^TA_{\mathcal{I}_{k}})} \|A_{\tau_k} x_{k}- b_{\tau_k}\|_{\infty}^{2}  \notag
\\
&= \| x_{k}-x_\star \|^{2}_2- \frac{1}{\binom{m}{\beta}}\frac{|\mathcal{I}_{k}|}{\lambda_{\max} (A_{\mathcal{I}_{k}}^TA_{\mathcal{I}_{k}})} \sum  \limits _{\tau_k \in \binom{m}{\beta}}  \|A_{\tau_k} x_{k}- b_{\tau_k}\|_{\infty}^{2},  \notag
\end{align}
which together with $\xi_k$ 
defined in \eqref{1000} leads to
\begin{align}
\textrm{E}\|x_{k+1}-x_\star\|^{2}_2
&\leq\| x_{k}-x_\star \|^{2}_2- \frac{1}{\binom{m}{\beta}}\frac{|\mathcal{I}_{k}|}{\lambda_{\max} (A_{\mathcal{I}_{k}}^TA_{\mathcal{I}_{k}})} \frac{1}{\xi_k}\sum  \limits _{\tau_k \in \binom{m}{\beta}}\|A_{\tau_k} x_{k}- b_{\tau_k}\|_{2}^{2}  \notag
\\
&=\| x_{k}-x_\star \|^{2}_2- \frac{1}{\binom{m}{\beta}}\frac{|\mathcal{I}_{k}|}{\lambda_{\max} (A_{\mathcal{I}_{k}}^TA_{\mathcal{I}_{k}})} \frac{1}{\xi_k}\frac{\binom{m}{\beta}\beta}{m}\|A x_{k}- b\|_{2}^{2}.  \notag
\end{align}
Further, considering (\ref{2}), we get
\begin{align}
\textrm{E}\|x_{k+1}-x_\star\|^{2}_2
\leq\left(1- \frac{\beta}{\xi_k}\frac{ |\mathcal{I}_{k}|}{ m}\frac{\lambda_{\min}(A^TA)}{\lambda_{\max} (A_{\mathcal{I}_{k}}^TA_{\mathcal{I}_{k}})}  \right)\| x_{k}-x_\star \|^{2}_2,  \notag
\end{align}
which is the desired result.
\end{proof}

\begin{remark}
\label{rmk3}
According to Lemma \ref{theorem1}, it is easy to see that $\frac{\lambda_{\min}(A^TA)}{\lambda_{\max} (A_{\mathcal{I}_{k}}^TA_{\mathcal{I}_{k}})}\leq1$, which together with the fact $1\leq\xi_k\leq\beta$ yields
$$1- \frac{\beta}{\xi_k}\frac{ |\mathcal{I}_{k}|}{ m}\frac{\lambda_{\min}(A^TA)}{\lambda_{\max} (A_{\mathcal{I}_{k}}^TA_{\mathcal{I}_{k}})}\leq1- \frac{ |\mathcal{I}_{k}|}{ m}\frac{\lambda_{\min}(A^TA)}{\lambda_{\max} (A_{\mathcal{I}_{k}}^TA_{\mathcal{I}_{k}})}<1.  $$
That is, the convergence factor of the BSKM1 method is indeed smaller than 1.

\end{remark}
\begin{remark}
\label{rmk4}
From Algorithm \ref{alg2}, we know that $\{x|A_{\mathcal{I}_{k}}x=b_{\mathcal{I}_{k}}\}\subset\{x|A_{(t_{k})}x=b_{(t_{k})}\}$ since $t_k \in \mathcal{I}_{k}$. Similar to the analysis in \cite{haddock2019greed}, we can obtain $$\|x_k^{SKM}-x_{k-1}\|^2_2\leq\|x_k^{BSKM1}-x_{k-1}\|^2_2,$$
which together with the fact
\begin{align*}
\left\|{x}_{k}^{S K M}-{x}_{k-1}\right\|^{2}_2+\left\|{x}_{k}^{S K M}-{x}_{\star}\right\|^{2}_2&=\left\|{x}_{k-1}-{x}_{\star}\right\|^{2}_2\\
&=\left\|{x}_{k}^{BSKM1}-{x}_{k-1}\right\|^{2}_2+\left\|{x}_{k}^{BSKM1}-{x}_{\star}\right\|^{2}_2 \notag
\end{align*}
leads to
$$\|x_k^{BSKM1}-x_{\star}\|^2_2\leq\|x_k^{SKM}-x_{\star}\|^2_2.$$
In the above expressions, $x_k^{BSKM1}$ and $x_k^{SKM}$ denote the next approximations  generated by the BSKM1 and SKM methods, respectively. Hence, the BSKM1 method converges at least as quickly as the SKM method.

In addition, setting $l_k={\rm arg} \max \limits _{1\leq i\leq m} ( b_{(i)}- A_{(i)} x_{k})^2  $, from Algorithm \ref{alg2}, we can obtain $\{x|A_{\mathcal{I}_{k}}x=b_{\mathcal{I}_{k}}\}\subset\{x|A_{(l_{k})}x=b_{(l_{k})}\}$ since $l_k \in \mathcal{I}_{k}$. So, we also immediately get that the BSKM1 method converges at least as quickly as the Motzkin method.
\end{remark}

\section{The second block sampling Kaczmarz-Motzkin method }\label{sec4}
Considering that $|\mathcal{I}_{k}|$ in Algorithm \ref{alg2} may be $1$ for all $k=1, 2, \ldots$ and 
the size of the index set $\mathcal{I}_{k}$ cannot be controlled,
we design the second block sampling Kaczmarz-Motzkin (BSKM2) method, which is presented in Algorithm \ref{alg3}. 
The biggest difference between the BSKM2 and BSKM1 methods is the way to build the index set. For Algorithm \ref{alg3}, we can control the size of the index set $ \mathcal{J}_{k}$.  

\begin{algorithm}
\caption{ The BSKM2 method} 
\label{alg3}
\begin{algorithmic}
\STATE{\mbox{Input:} ~ Matrix $A\in R^{m\times n}$, vector $b\in R^{m}$, parameter $\eta$, initial estimate $x_0$.}
\STATE{\mbox{Output:} ~Approximate $x$ solving $Ax=b$.}
\STATE{1. For $j=1:\eta$}
\STATE{2. Choose a sample of $\beta_j$ constraints, $\tau_j$, uniformly at random from among the rows of $A$ without replacement.} 
\STATE{3. Compute $t_j={\rm arg} \max \limits _{i\in \tau_j} ( b_{(i)}- A_{(i)} x_{k})^2  $.}
\STATE{4. End for}
\STATE{5. Determine the index set
\begin{align}
  \mathcal{J}_{k}=\{t_1, t_2, \ldots, t_{\eta}\}.\notag
\end{align}}
\STATE{6. Update
\begin{align}
  x_{k+1}=x_{k}+A_{\mathcal{J}_{k}}^{\dagger}(b_{\mathcal{J}_{k}}-A_{\mathcal{J}_{k}}x_{k}).\notag
\end{align}}
\STATE{7. Repeat.}
\end{algorithmic}
\end{algorithm}

\begin{remark}
\label{rmk5}
Note that if
$$( b_{(i_k)}- A_{(i_k)} x_{k})^2 =  \max \limits _{1 \leq i  \leq m}( b_{(i)}- A_{(i)} x_{k})^2 ,$$
then $i_k\in\mathcal{J}_{k}.$ So the index set $\mathcal{J}_{k}$ in Algorithm \ref{alg3} is always nonempty. 
\end{remark}
\begin{remark}
\label{rmk6}
The iteration index $t_k$ used for updating of the SKM method belongs to the index set $\mathcal{J}_{k}$ used in the BSKM2 method. So the latter makes progress faster than the former. In addition, If $\eta=1$, the BSKM2 method reduces to the Motzkin method.
\end{remark}

Now, we bound the expected rate of convergence for Algorithm \ref{alg3}.
\begin{theorem}
\label{theorem3}
From an initial guess $x_0\in{\rm R(A^T)}$, the sequence $\{x_k\}_{k=0}^\infty$ generated by the BSKM2 method converges linearly in expectation to the least-Euclidean-norm solution $x_{\star}=A^{\dag}b$ and
\begin{eqnarray*}
\textrm{E}\|x_{k+1}-x_\star\|^{2}_2\leq\left(1- \frac{\eta \lambda_{\min} (A_{\tau_{h}}^TA_{\tau_{h}})}{|\tau_{h}|\lambda_{\max} (A_{\mathcal{J}_{k}}^TA_{\mathcal{J}_{k}})}\right) \|x_{k}-x_\star \|_{2}^{2},
\end{eqnarray*}
where $\tau_{h}$ satisfies $\|A_{\tau_{h}} x_{k}- b_{\tau_{h}}\|_{\infty}^{2}=\min \limits _{1\leq i\leq \eta} \|A_{\tau_{i}} x_{k}- b_{\tau_{i}}\|_{\infty}^{2}$.
\end{theorem}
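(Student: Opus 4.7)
The plan is to follow the scheme of the proof of Theorem~\ref{theorem2} line-by-line, with the only structural change being that the residual lower bound must be extracted from $\eta$ sub-sample competitions rather than a single one. Throughout I would use $Ax_\star = b$, the fact that $A_{\mathcal{J}_k}^\dagger A_{\mathcal{J}_k}$ is an orthogonal projector, and the Courant--Fisher inequality \eqref{2} to reduce the estimate to a lower bound on $\|A_{\mathcal{J}_k}(x_k-x_\star)\|_2^2$ in terms of $\|x_k-x_\star\|_2^2$.

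First I would write $x_{k+1}-x_\star = (I - A_{\mathcal{J}_k}^\dagger A_{\mathcal{J}_k})(x_k-x_\star)$, take squared Euclidean norms, and apply the Pythagorean theorem together with the identity $\lambda_{\min}((A_{\mathcal{J}_k}^\dagger)^{T} A_{\mathcal{J}_k}^\dagger) = 1/\lambda_{\max}(A_{\mathcal{J}_k}^{T} A_{\mathcal{J}_k})$ exactly as in Theorem~\ref{theorem2}. This gives
$$\|x_{k+1}-x_\star\|_2^2 \le \|x_k-x_\star\|_2^2 - \frac{1}{\lambda_{\max}(A_{\mathcal{J}_k}^{T}A_{\mathcal{J}_k})}\sum_{j=1}^{\eta}\bigl(A_{(t_j)} x_k - b_{(t_j)}\bigr)^2,$$
where the sum expansion of $\|A_{\mathcal{J}_k}(x_k-x_\star)\|_2^2$ uses that the $t_j$ are distinct, which follows from the natural reading of step~2 of Algorithm~\ref{alg3} that the $\eta$ samples $\tau_j$ are drawn without replacement and hence are pairwise disjoint.

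Next, by the greedy choice of $t_j$ inside $\tau_j$, each summand equals $\|A_{\tau_j}x_k - b_{\tau_j}\|_\infty^2$, so lower-bounding the $\eta$-term sum by $\eta$ times its smallest element yields
$$\sum_{j=1}^{\eta}\bigl(A_{(t_j)} x_k - b_{(t_j)}\bigr)^2 \;\ge\; \eta\,\|A_{\tau_h} x_k - b_{\tau_h}\|_\infty^2,$$
with $\tau_h$ as in the statement. From here I would pass from the $\infty$-norm to the $2$-norm through $\|v\|_\infty^2 \ge \|v\|_2^2/|\tau_h|$, and then apply Courant--Fisher in the form $\|A_{\tau_h} y\|_2^2 \ge \lambda_{\min}(A_{\tau_h}^{T}A_{\tau_h})\|y\|_2^2$ to $y = x_k-x_\star$. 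Chaining these estimates produces exactly the contraction factor stated; taking expectation is a cosmetic step because the bound holds deterministically for the realized samples.

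The main obstacle I anticipate is the last Courant--Fisher step on the subsampled matrix $A_{\tau_h}$: the inequality $\|A_{\tau_h} y\|_2^2 \ge \lambda_{\min}(A_{\tau_h}^{T}A_{\tau_h})\|y\|_2^2$ is only informative for $y\in \mathrm{R}(A_{\tau_h}^{T})$, while the iteration only guarantees $x_k - x_\star \in \mathrm{R}(A^{T})$. I would handle this by interpreting $\lambda_{\min}$ as the smallest eigenvalue of $A_{\tau_h}^{T}A_{\tau_h}$ (vanishing when $|\tau_h|<n$, in which case the bound is trivially true) in the same convention as \eqref{2}; alternatively, invoking Lemma~\ref{theorem1} lets one further relate $\lambda_{\min}(A_{\tau_h}^{T}A_{\tau_h})$ to $\lambda_{\min}(A^{T}A)$ if a non-vacuous restatement is wanted. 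Everything else is routine algebra.
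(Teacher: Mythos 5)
Your proposal is correct and follows essentially the same route as the paper's own proof: the same projector/Pythagorean decomposition, the identification of each summand with $\|A_{\tau_j}x_k-b_{\tau_j}\|_\infty^2$, the lower bound by $\eta$ times the smallest term, the $\|v\|_\infty^2\geq\|v\|_2^2/|\tau_h|$ step, and the final Courant--Fisher application to $A_{\tau_h}$. The only (cosmetic) difference is that you bound the sum deterministically before taking expectation while the paper expands the expectation over all samples first, and you explicitly flag the range-space caveat in the last step, which the paper passes over silently.
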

\begin{proof}
Following an analogous argument to Theorem \ref{theorem2}, we can obtain
\begin{align}
\|x_{k+1}-x_\star\|^{2}_2
&\leq  \|   x_{k}-x_\star \|^{2}_2-\lambda_{\min} ((A_{\mathcal{J}_{k}}^{\dagger})^TA_{\mathcal{J}_{k}}^{\dagger})\| A_{\mathcal{J}_{k}} ( x_{k}-x_\star)\|^{2}_2\notag
\\
&= \| x_{k}-x_\star \|^{2}_2-\lambda_{\max}^{-1}(A_{\mathcal{J}_{k}}^TA_{\mathcal{J}_{k}})\| A_{\mathcal{J}_{k}} ( x_{k}-x_\star)\|^{2}_2 \notag
\\
&= \| x_{k}-x_\star \|^{2}_2-\lambda_{\max}^{-1}(A_{\mathcal{J}_{k}}^TA_{\mathcal{J}_{k}})\sum \limits _{i_k\in \mathcal{J}_{k}}(A_{(i_k)} x_{k}- b_{(i_k)})^2 \notag
\\
&= \| x_{k}-x_\star \|^{2}_2-\lambda_{\max}^{-1}(A_{\mathcal{J}_{k}}^TA_{\mathcal{J}_{k}})\sum \limits _{j=1}^{\eta}\|A_{\tau_j} x_{k}- b_{\tau_j}\|_{\infty}^{2}. \notag
\end{align}
Now, taking expectation of both sides, we have
\begin{align}
\textrm{E}\|x_{k+1}-x_\star\|^{2}_2
&\leq\| x_{k}-x_\star \|^{2}_2- \textrm{E}\lambda_{\max}^{-1}(A_{\mathcal{J}_{k}}^TA_{\mathcal{J}_{k}})\sum \limits _{j=1}^{\eta}\|A_{\tau_j} x_{k}- b_{\tau_j}\|_{\infty}^{2}\notag
\\
&= \| x_{k}-x_\star \|^{2}_2- \lambda_{\max}^{-1}(A_{\mathcal{J}_{k}}^TA_{\mathcal{J}_{k}})\sum \limits _{j=1}^{\eta}\textrm{E} \|A_{\tau_j} x_{k}- b_{\tau_j}\|_{\infty}^{2}.\notag
\end{align}
Note that
\begin{align}
\sum \limits _{j=1}^{\eta}\textrm{E} \|A_{\tau_j} x_{k}- b_{\tau_j}\|_{\infty}^{2}
&= \textrm{E}\|A_{\tau_1} x_{k}- b_{\tau_1}\|_{\infty}^{2}  +\textrm{E} \|A_{\tau_2} x_{k}- b_{\tau_2}\|_{\infty}^{2}+\cdots \textrm{E}  \|A_{\tau_{\eta}} x_{k}- b_{\tau_{\eta}}\|_{\infty}^{2}\notag
\\
&=  \sum  \limits _{\tau_1 \in \binom{m}{\beta_1}} \frac{1}{\binom{m}{\beta_1}}   \|A_{\tau_1} x_{k}- b_{\tau_1}\|_{\infty}^{2}  +\sum  \limits _{\tau_2 \in \binom{m-\beta_1}{\beta_2}} \frac{1}{\binom{m-\beta_1}{\beta_2}}   \|A_{\tau_2} x_{k}- b_{\tau_2}\|_{\infty}^{2}              \notag
\\
&~~~~+\cdots \sum  \limits _{\tau_{ \eta} \in \binom{m-\beta_1-\ldots-\beta_{\eta-1}}{\beta_\eta}} \frac{1}{\binom{m-\beta_1-\ldots-\beta_{\eta-1}}{\beta_\eta}}   \|A_{\tau_{\eta}} x_{k}- b_{\tau_{\eta}}\|_{\infty}^{2},  \notag
\end{align}
which together with $\|A_{\tau_{h}} x_{k}- b_{\tau_{h}}\|_{\infty}^{2}=\min \limits _{1\leq i\leq \eta} \|A_{\tau_{i}} x_{k}- b_{\tau_{i}}\|_{\infty}^{2}$ leads to
\begin{align}
\sum \limits _{j=1}^{\eta}\textrm{E} \|A_{\tau_j} x_{k}- b_{\tau_j}\|_{\infty}^{2}
&\geq \sum  \limits _{\tau_1 \in \binom{m}{\beta_1}} \frac{1}{\binom{m}{\beta_1}}   \|A_{\tau_h} x_{k}- b_{\tau_h}\|_{\infty}^{2}  +\sum  \limits _{\tau_2 \in \binom{m-\beta_1}{\beta_2}} \frac{1}{\binom{m-\beta_1}{\beta_2}}   \|A_{\tau_h} x_{k}- b_{\tau_h}\|_{\infty}^{2}              \notag
\\
&~~~~+\cdots \sum  \limits _{\tau_{ \eta} \in \binom{m-\beta_1-\ldots-\beta_{\eta-1}}{\beta_\eta}} \frac{1}{\binom{m-\beta_1-\ldots-\beta_{\eta-1}}{\beta_\eta}}   \|A_{\tau_{h}} x_{k}- b_{\tau_{h}}\|_{\infty}^{2}  \notag
\\
&=\eta \|A_{\tau_{h}} x_{k}- b_{\tau_{h}}\|_{\infty}^{2}. \notag
\end{align}
Then, we get
\begin{align}
\textrm{E}\|x_{k+1}-x_\star\|^{2}_2
&\leq \| x_{k}-x_\star \|^{2}_2- \frac{\eta}{\lambda_{\max} (A_{\mathcal{J}_{k}}^TA_{\mathcal{J}_{k}})} \|A_{\tau_{h}} x_{k}- b_{\tau_{h}}\|_{\infty}^{2} \notag
\\
&\leq \| x_{k}-x_\star \|^{2}_2- \frac{\eta }{|\tau_{h}|\lambda_{\max} (A_{\mathcal{J}_{k}}^TA_{\mathcal{J}_{k}})} \|A_{\tau_{h}} x_{k}- b_{\tau_{h}}\|_{2}^{2}, \notag
\end{align}
which together with (\ref{2}) leads to
\begin{align}
\textrm{E}\|x_{k+1}-x_\star\|^{2}_2
\leq (1- \frac{\eta \lambda_{\min} (A_{\tau_{h}}^TA_{\tau_{h}})}{|\tau_{h}|\lambda_{\max} (A_{\mathcal{J}_{k}}^TA_{\mathcal{J}_{k}})}) \|x_{k}-x_\star \|_{2}^{2}, \notag
\end{align}
which is the desired result.
\end{proof}

\begin{remark}
\label{rmk7}
According to Lemma \ref{theorem1}, we have $\frac{\lambda_{\min}(A^TA)}{\lambda_{\max} (A_{\mathcal{J}_{k}}^TA_{\mathcal{J}_{k}})}\leq1$ and $\lambda_{\min}(A^TA) \leq \lambda_{\min} (A_{\tau_{h}}^TA_{\tau_{h}})$, which together with the facts $|\tau_{h}|<m$ and $\eta<m$ yield
$$1- \frac{\eta \lambda_{\min} (A_{\tau_{h}}^TA_{\tau_{h}})}{|\tau_{h}|\lambda_{\max} (A_{\mathcal{J}_{k}}^TA_{\mathcal{J}_{k}})}<1- \frac{\eta \lambda_{\min} (A ^TA )}{m\lambda_{\max} (A_{\mathcal{J}_{k}}^TA_{\mathcal{J}_{k}})}<1.  $$
That is, the convergence factor of the BSKM2 method is indeed smaller than 1.

\end{remark}
\begin{remark}
\label{rmk8}
Note that $\{x|A_{\mathcal{J}_{k}}x=b_{\mathcal{J}_{k}}\}\subset\{x|A_{(t_{k})}x=b_{(t_{k})}\}$, where $t_{k}$ is the iteration index of the SKM method. Thus, similar to the analysis in Remark \ref{rmk4}, we immediately obtain that the BSKM2 method converges at least as quickly as the SKM method. In addition, as $\{x|A_{\mathcal{I}_{k}}x=b_{\mathcal{I}_{k}}\}\subset\{x|A_{(l_{k})}x=b_{(l_{k})}\}$, where $l_k={\rm arg} \max \limits _{1\leq i\leq m} ( b_{(i)}- A_{(i)} x_{k})^2  $, we also get that the BSKM2 method converges at least as quickly as the Motzkin method.
\end{remark}

\begin{remark}
\label{rmk9}
From Algorithms \ref{alg2} and \ref{alg3}, we can find that both the update rules of the two methods need to compute the Moore-Penrose pseudoinverse of the row submatrix $A_{\mathcal{I}_{k}}$ or $A_{\mathcal{J}_{k}}$ in each iteration, which may be expensive. To avoid computing the Moore-Penrose pseudoinverse, we can adopt the following pseudoinverse-free iteration format:
$$x_{k+1}=x_{k}-     \sum_{i \in \mathcal{I}_{k}} w_{i} \frac{A_{(i)} x_{k}-b_{(i)}}{\left\|A_{(i)}\right\|^{2}_2} A_{(i)}^{T},$$ where $ w_{i}$ represents
the weight corresponding to the $i$th row.
See \cite{Necoara2019,Du20202,li2020greedy,moorman2020randomized} for a detailed discussion on this topic.
\end{remark}

\section{Numerical experiments}\label{sec5}
In this section, we mainly compare our two block sampling Kaczmarz-Motzkin methods (BSKM1, BSKM2) and the SKM method in terms of the iteration numbers (denoted as ``Iteration'') and computing time in seconds (denoted as ``CPU time(s)'') using the matrix $A\in R^{m\times n}$ from two sets. One is generated randomly by using the MATLAB function \texttt{randn}, and the other one contains the matrices in Table \ref{tab1} from the University of Florida sparse matrix collection \cite{Davis2011}. To compare these methods more clearly, we set $\eta=\beta$. In addition, for the sparse matrices, the density is defined as follows:
\begin{eqnarray*}
\texttt{density}=\frac{\texttt{number of nonzero of an $m\times n$ matrix}}{\texttt{mn}}.
\end{eqnarray*}

\begin{table}[!htbp]\centering
\begin{small}\scriptsize
\caption{The properties of different sparse matrices.} \label{tab1}
 \begin{tabular}{cccccccccccccccccc}
 \hline
\textbf{name}  &\textbf{ch8-8-b2}     & \textbf{ch7-8-b2} &\textbf{Franz7}&\textbf{ ch7-9-b2}&\textbf{mk12-b2} &\textbf{relat7}   \cr
\hline
 $m \times n$  & $18816 \times 1568 $ & $11760 \times 1176$ & $10164 \times 1740$  &$17640 \times 1512$  &$13860 \times 1485 $  &$ 21924 \times 1045 $    \cr

Full rank       & Yes                 &    Yes               & Yes          & Yes            &Yes                   &No           \cr

Density        & 0.19\%              &    0.26\%            & 0.23\%        & 0.20\%            &0.20\%            &0.36\%            \cr

Condition number &  1.6326e+15         &   1.9439e+15           & 5.5318e+15        &  1.6077e+15             &1.8340e+15               & Inf             \cr
 \hline
\end{tabular}
\end{small}
\end{table}
In all the following specific experiments, we generate the solution vector $x_\star\in R^{n}$ using the MATLAB function \texttt{randn}, and set the vector $b\in R^{m}$ to be $b=Ax_\star$. All experiments start from an initial vector $x_0=0$, and terminate once the \emph{relative solution error }(RES), defined by $$\rm RES=\frac{\left\|x_{k}-A^{\dagger}b\right\|^{2}_2}{\left\|A^{\dagger}b\right\|^{2}_2},$$ satisfies $\rm RES<10^{-6}$, or the number of iteration steps exceeds 200,000.

 \begin{figure}[ht]
 \begin{center}
 \includegraphics [height=4.5cm,width=10cm ]{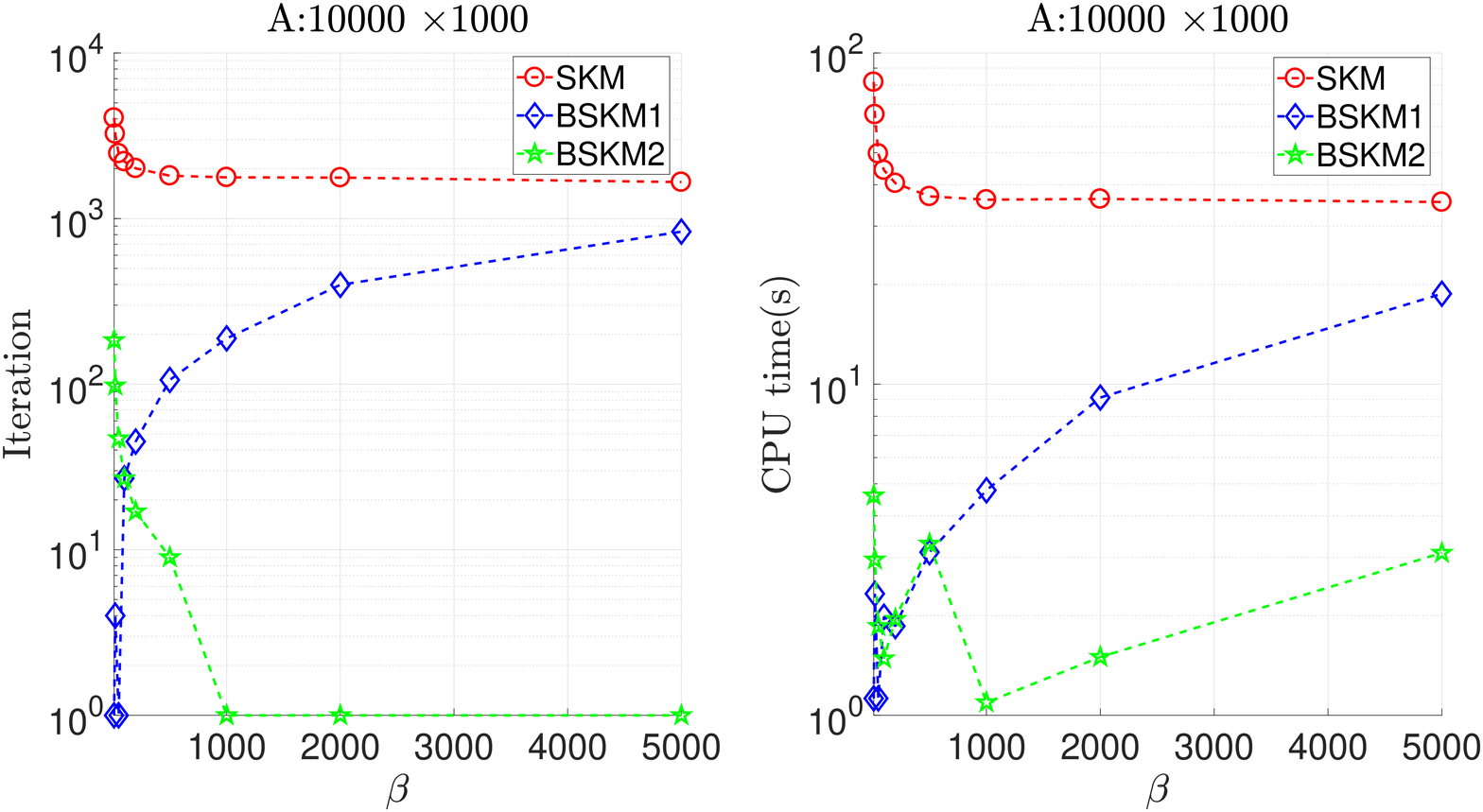}
  \end{center}
\caption{ Iteration and CPU time(s) versus $\beta$ $(10\sim5000)$ for three methods with matrices generated randomly.   }\label{fig1}
\end{figure}
 \begin{figure}[ht]
 \begin{center}
  \includegraphics [height=4.5cm,width=10cm ]{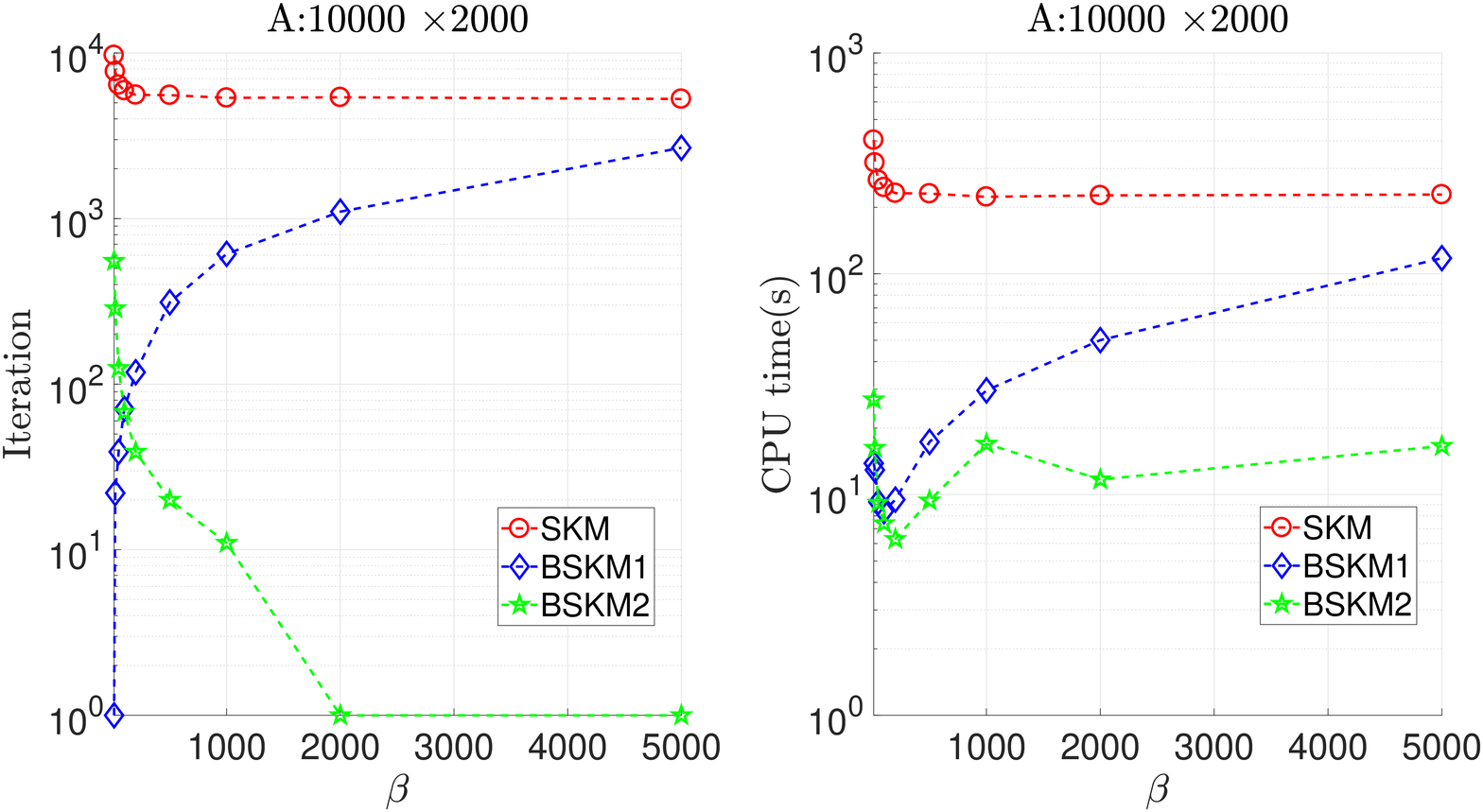}
  \end{center}
\caption{ Iteration and CPU time(s) versus $\beta$ $(10\sim5000)$ for three methods with matrices generated randomly.   }\label{fig2}
\end{figure}
 \begin{figure}[ht]
 \begin{center}
   \includegraphics [height=4.5cm,width=10cm ]{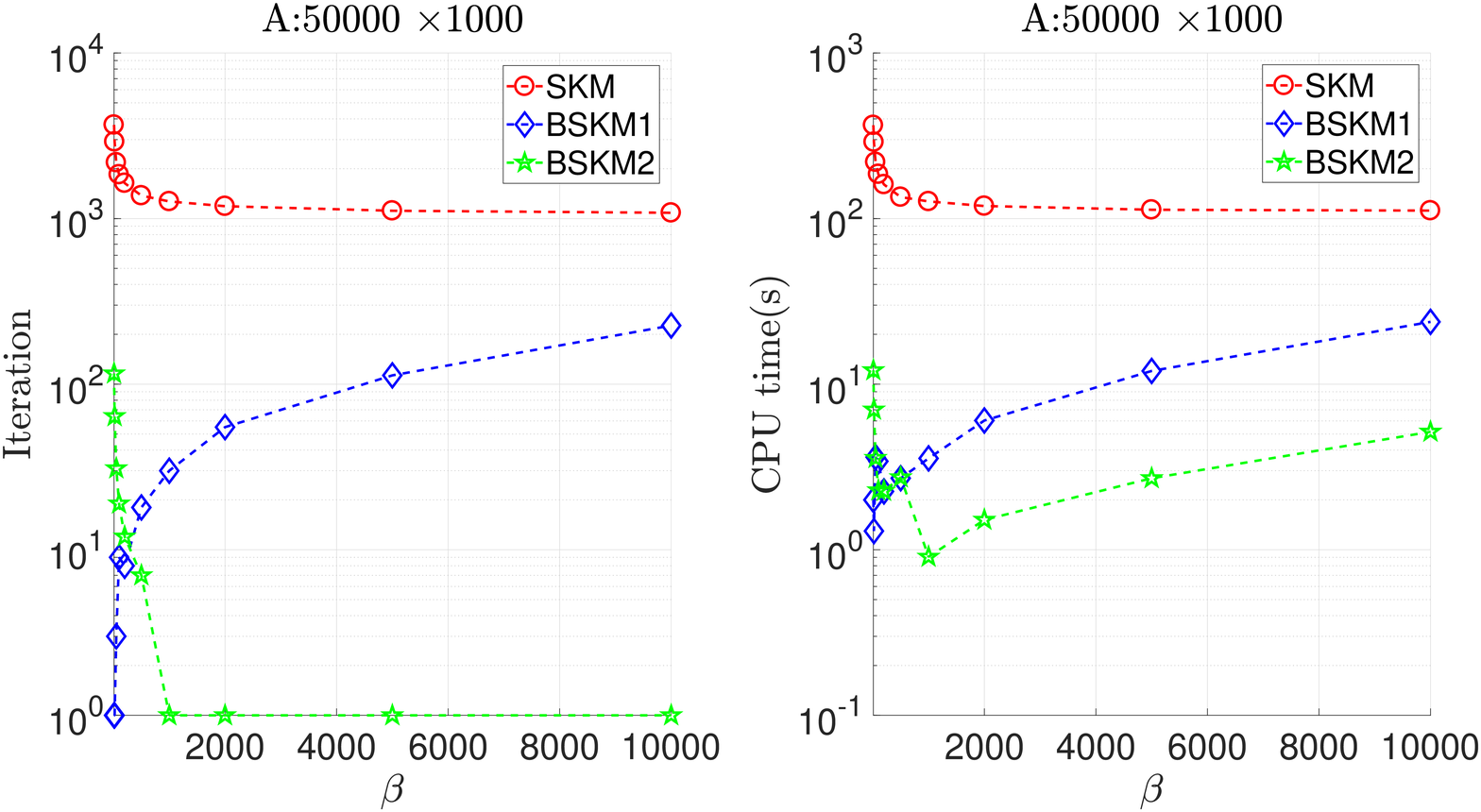}
  \end{center}
\caption{ Iteration and CPU time(s) versus $\beta$ $(10\sim10000)$ for three methods with matrices generated randomly.   }\label{fig3}
\end{figure}
 \begin{figure}[ht]
 \begin{center}
  \includegraphics [height=4.5cm,width=10cm ]{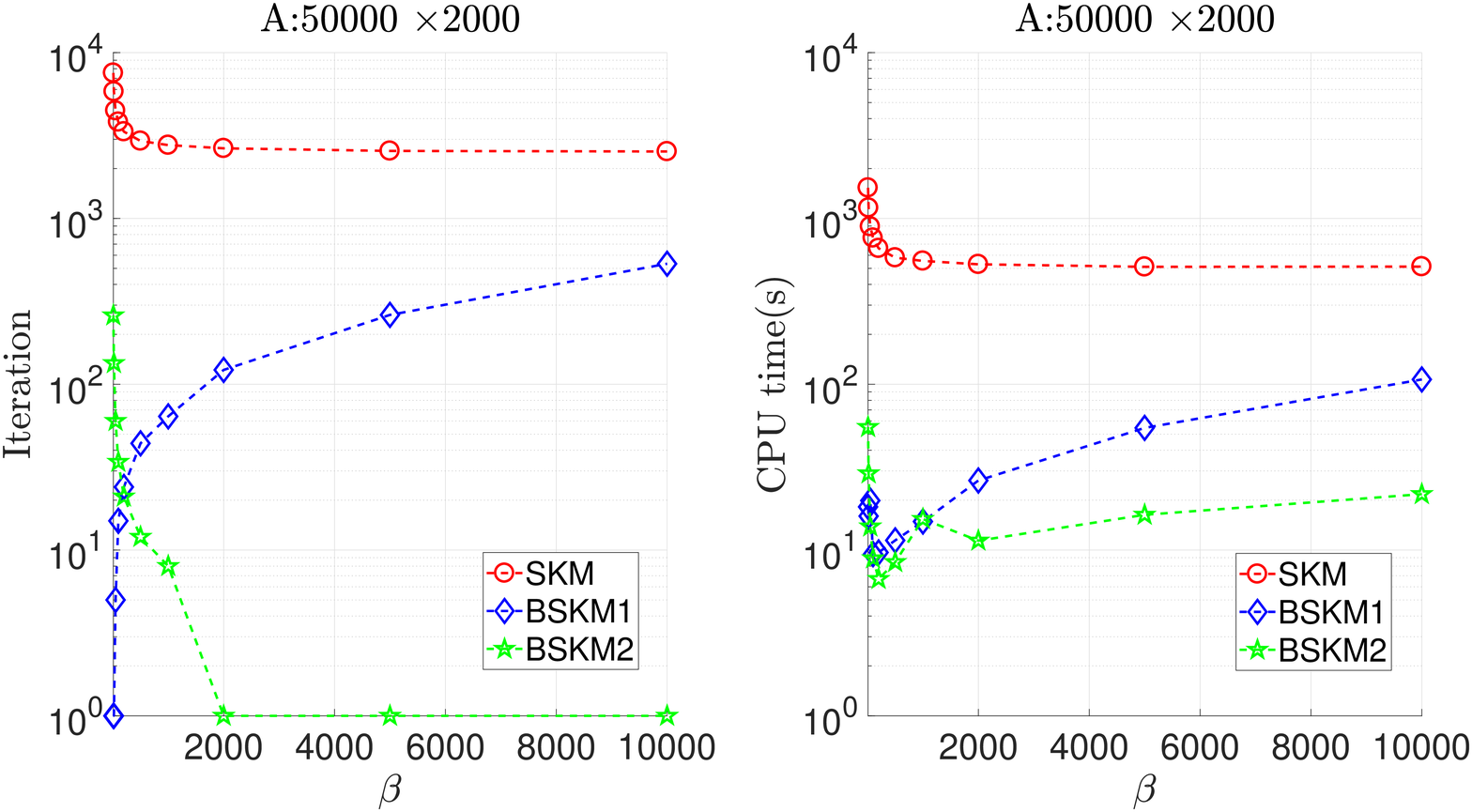}
  \end{center}
\caption{ Iteration and CPU time(s) versus $\beta$ $(10\sim10000)$ for three methods with matrices generated randomly.   }\label{fig4}
\end{figure}
\begin{figure}[ht]
 \begin{center}
\includegraphics [height=4.5cm,width=10cm ]{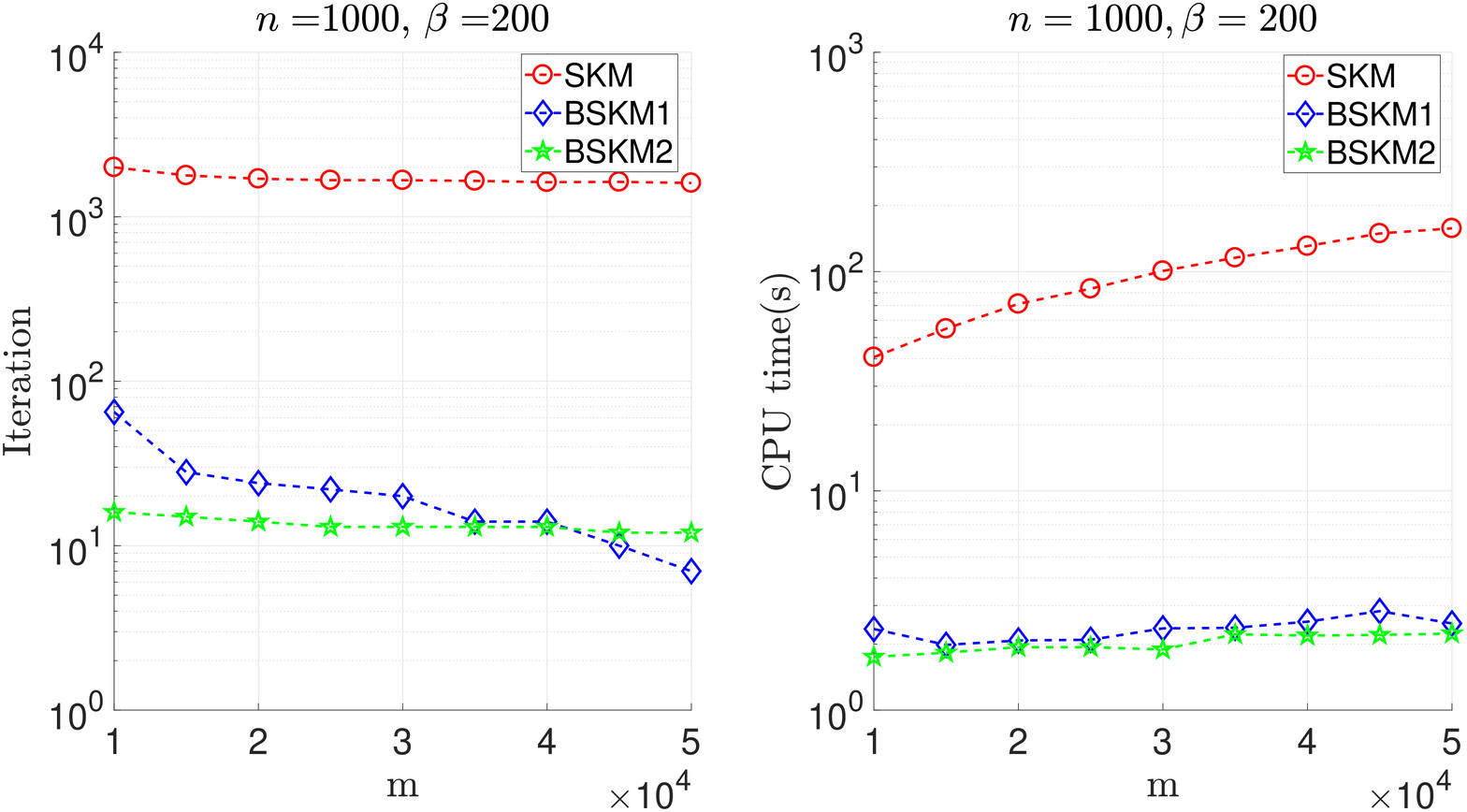}
 \end{center}
\caption{Iteration and CPU time(s) versus m (10000 $\thicksim$ 50000) with matrices generated randomly and $n=1000$ and $\beta=200$.}\label{fig5}
 \end{figure}
\begin{figure}[ht]
 \begin{center}
 \includegraphics [height=4.5cm,width=10cm ]{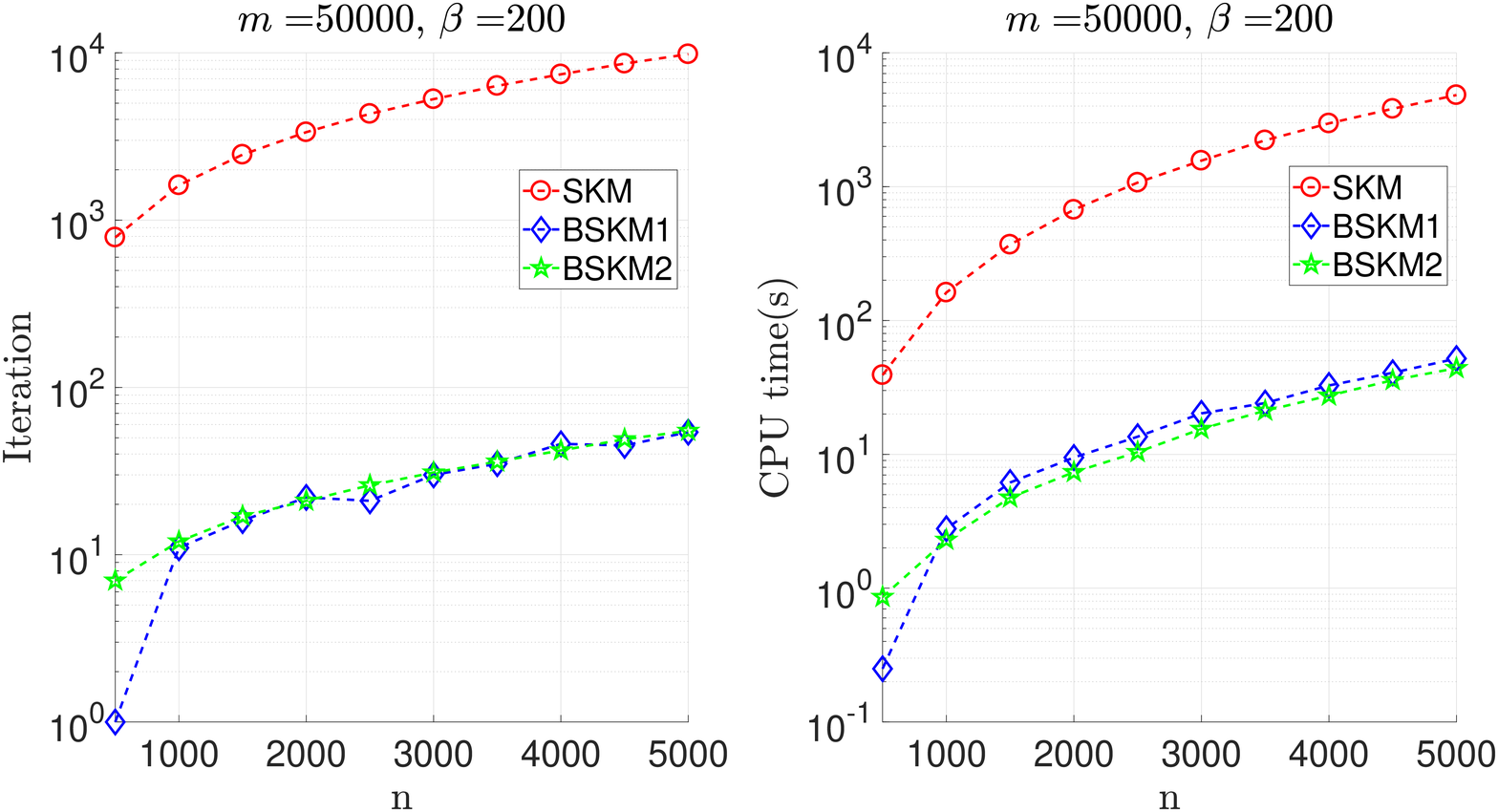}
 \end{center}
\caption{Iteration and CPU time(s) versus n (500 $\thicksim$ 5000) with matrices generated randomly and $m=50000$ and $\beta=200$.}\label{fig6}
 \end{figure}

\begin{figure}[ht]
 \begin{center}
\includegraphics [height=4.5cm,width=10cm ]{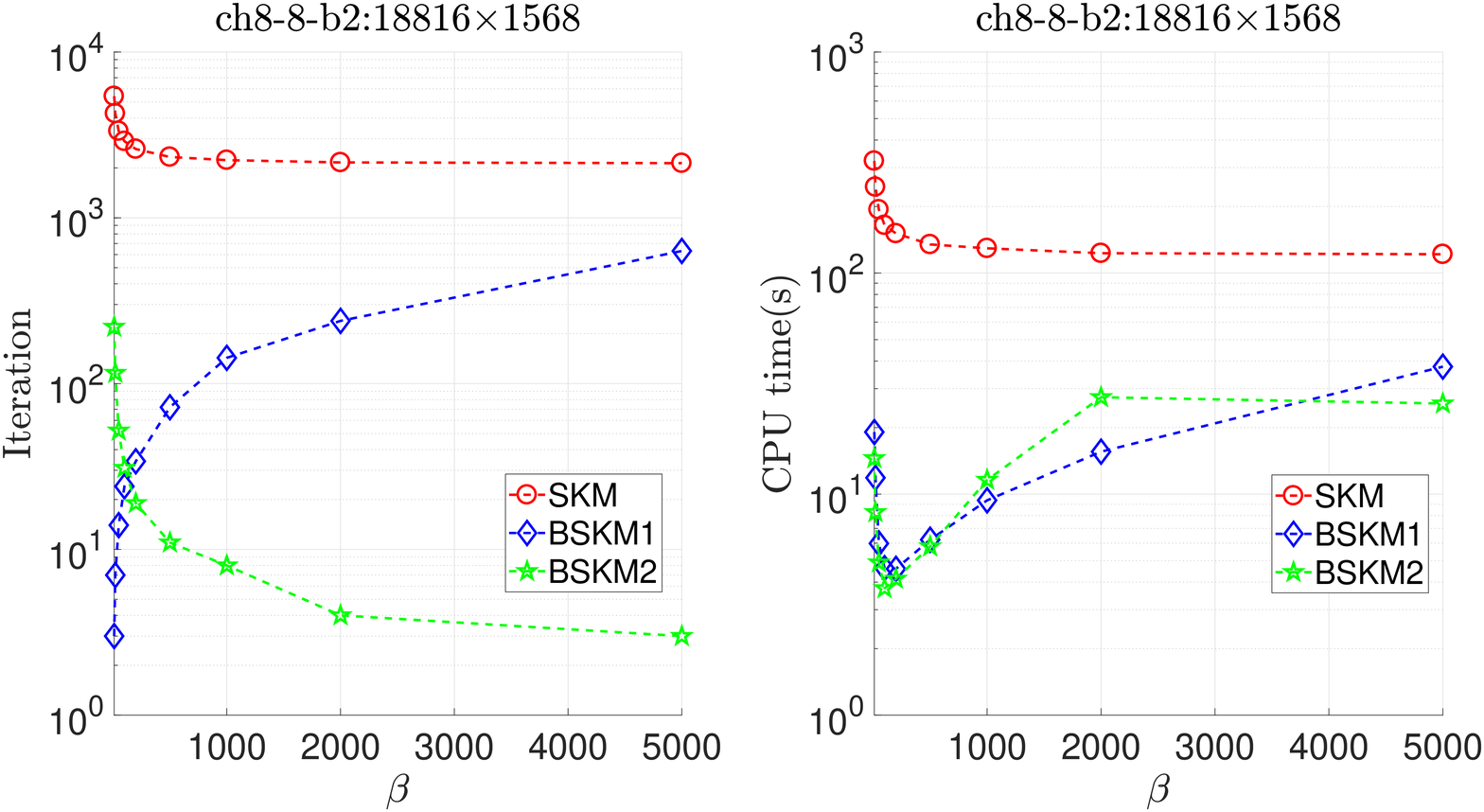}
 \end{center}
\caption{Iteration and CPU time(s) versus $\beta$ $(10\sim5000)$ for three methods with the sparse matrix \textbf{ch8-8-b2}.  }\label{fig7}
\end{figure}

\begin{figure}[ht]
 \begin{center}
 \includegraphics [height=4.5cm,width=10cm ]{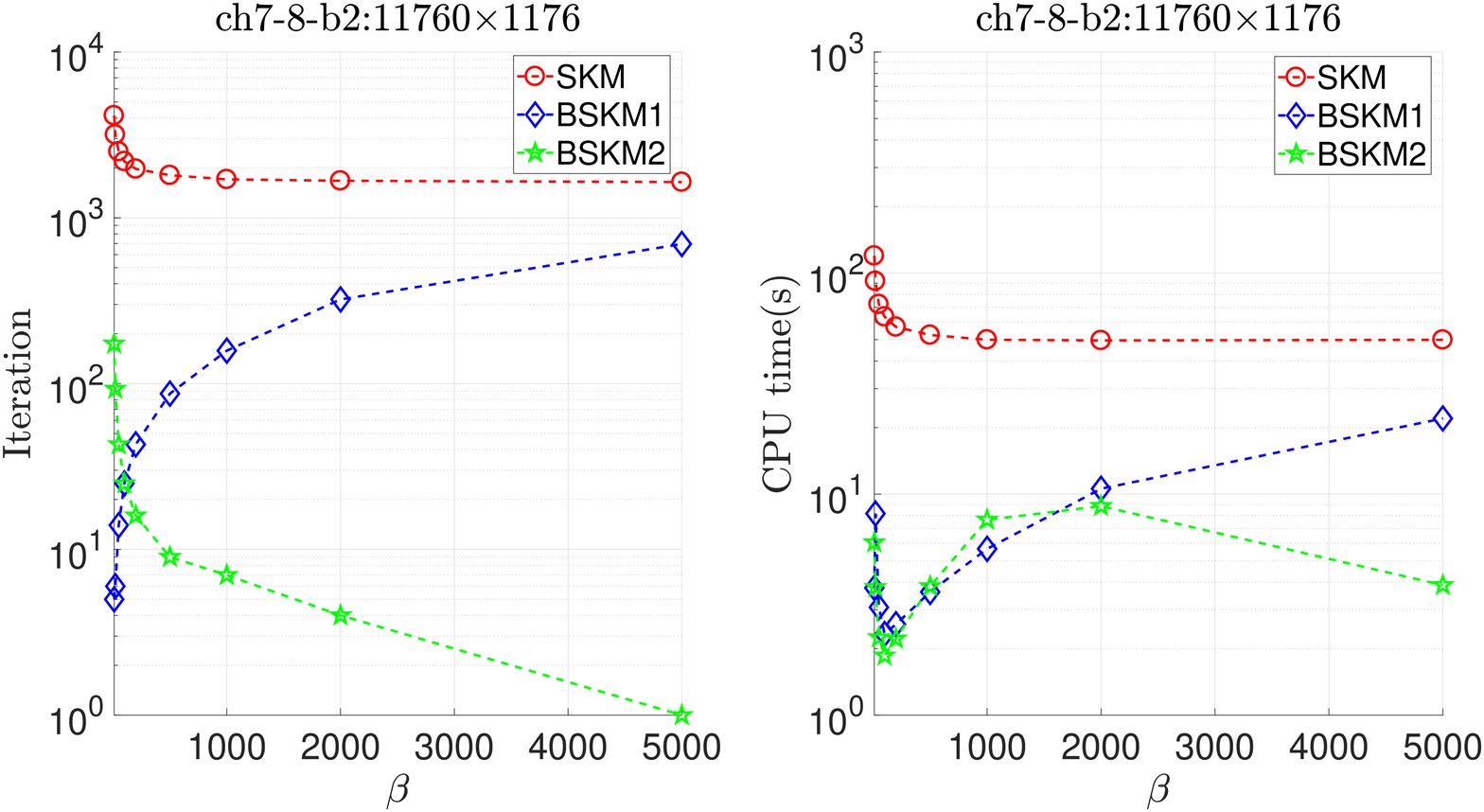}
 \end{center}
\caption{Iteration and CPU time(s) versus $\beta$ $(10\sim5000)$ for three methods with the sparse matrix \textbf{ch7-8-b2}.  }\label{fig8}
\end{figure}

\begin{figure}[ht]
 \begin{center}
 \includegraphics [height=4.5cm,width=10cm ]{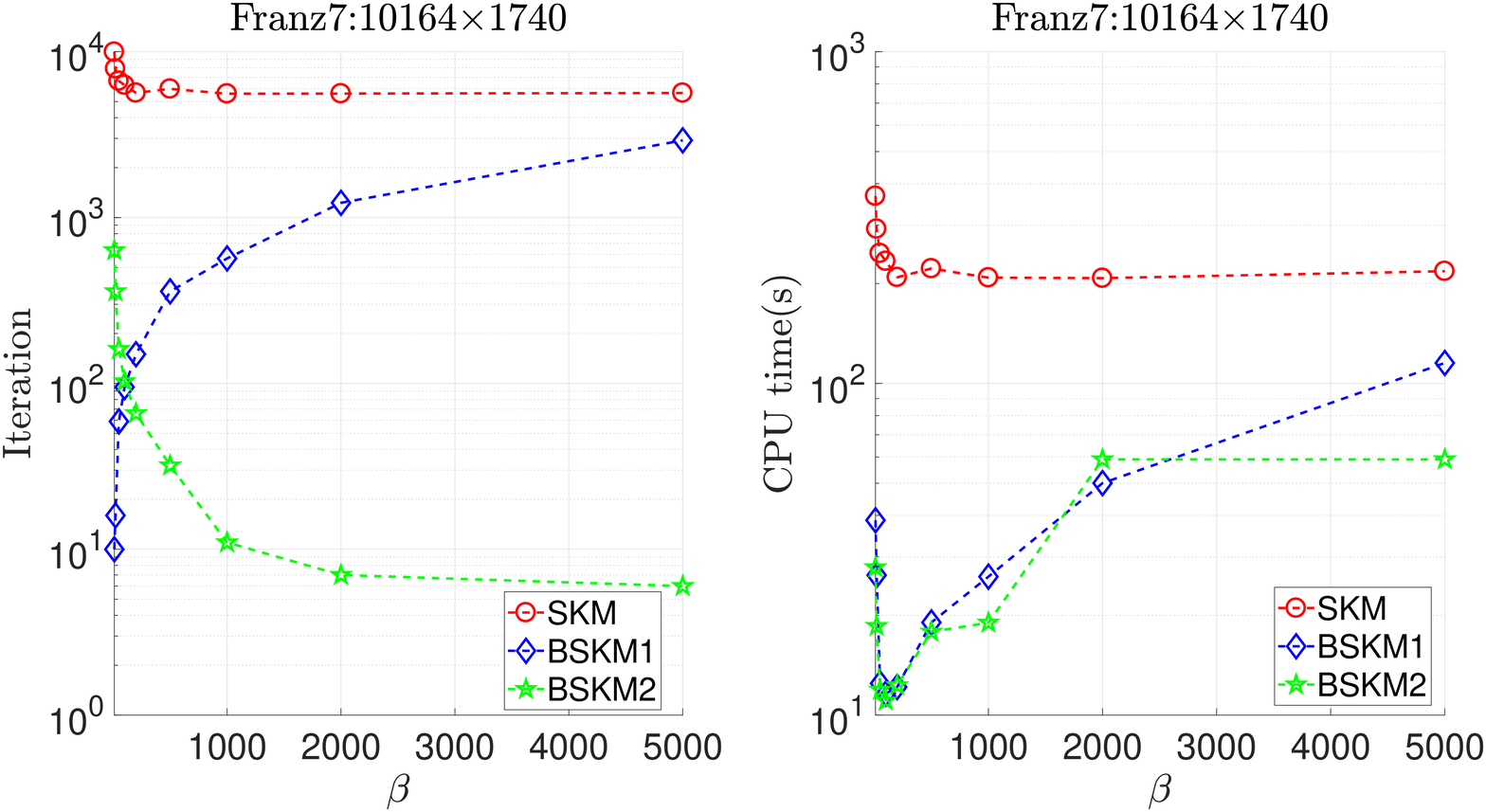}
 \end{center}
\caption{Iteration and CPU time(s) versus $\beta$ $(10\sim5000)$ for three methods with the sparse matrix \textbf{Franz7}.  }\label{fig9}
\end{figure}

\begin{figure}[ht]
 \begin{center}
 \includegraphics [height=4.5cm,width=10cm ]{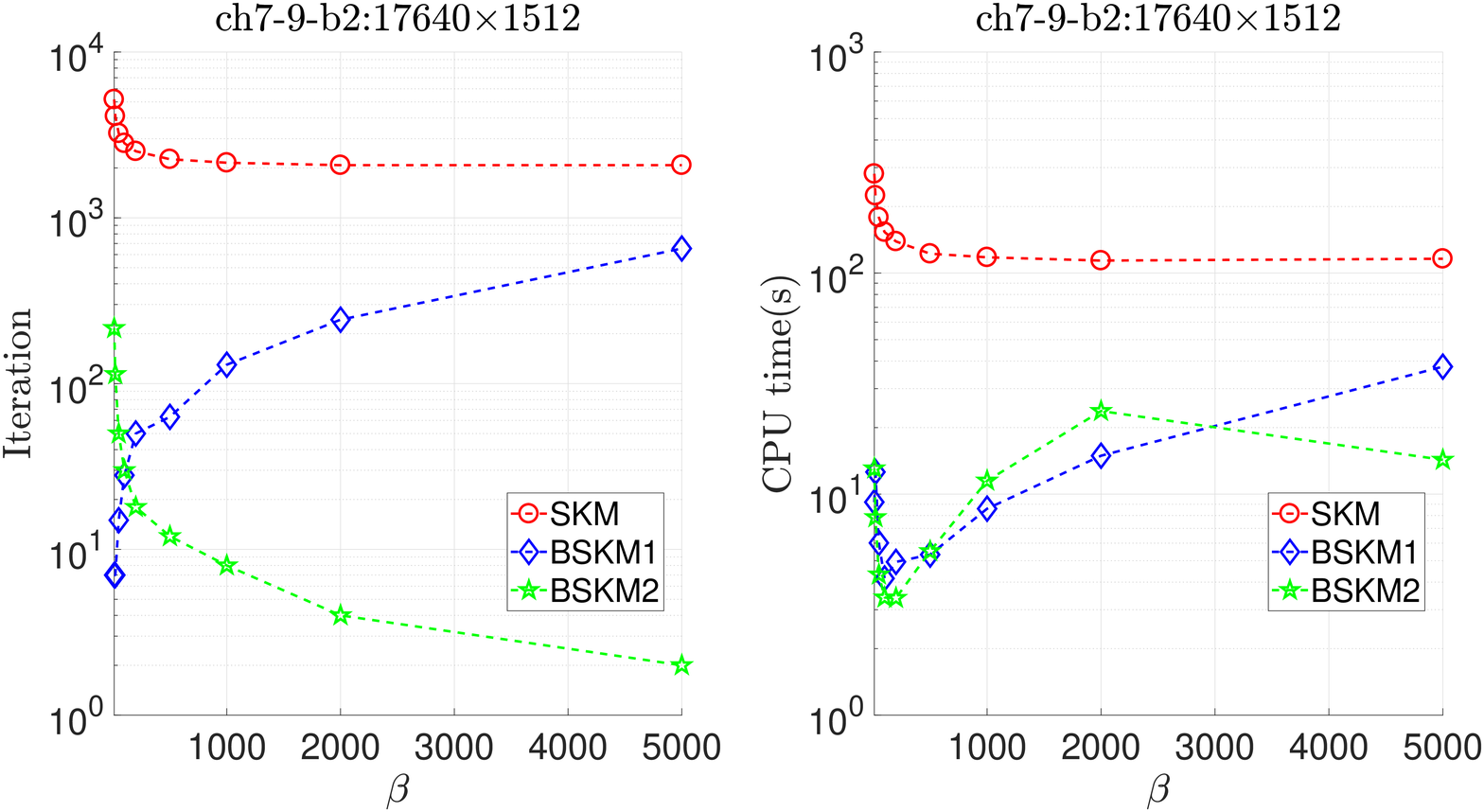}
 \end{center}
\caption{Iteration and CPU time(s) versus $\beta$ $(10\sim5000)$ for three methods with the sparse matrix \textbf{ch7-9-b2}.  }\label{fig10}
\end{figure}

\begin{figure}[ht]
 \begin{center}
 \includegraphics [height=4.5cm,width=10cm]{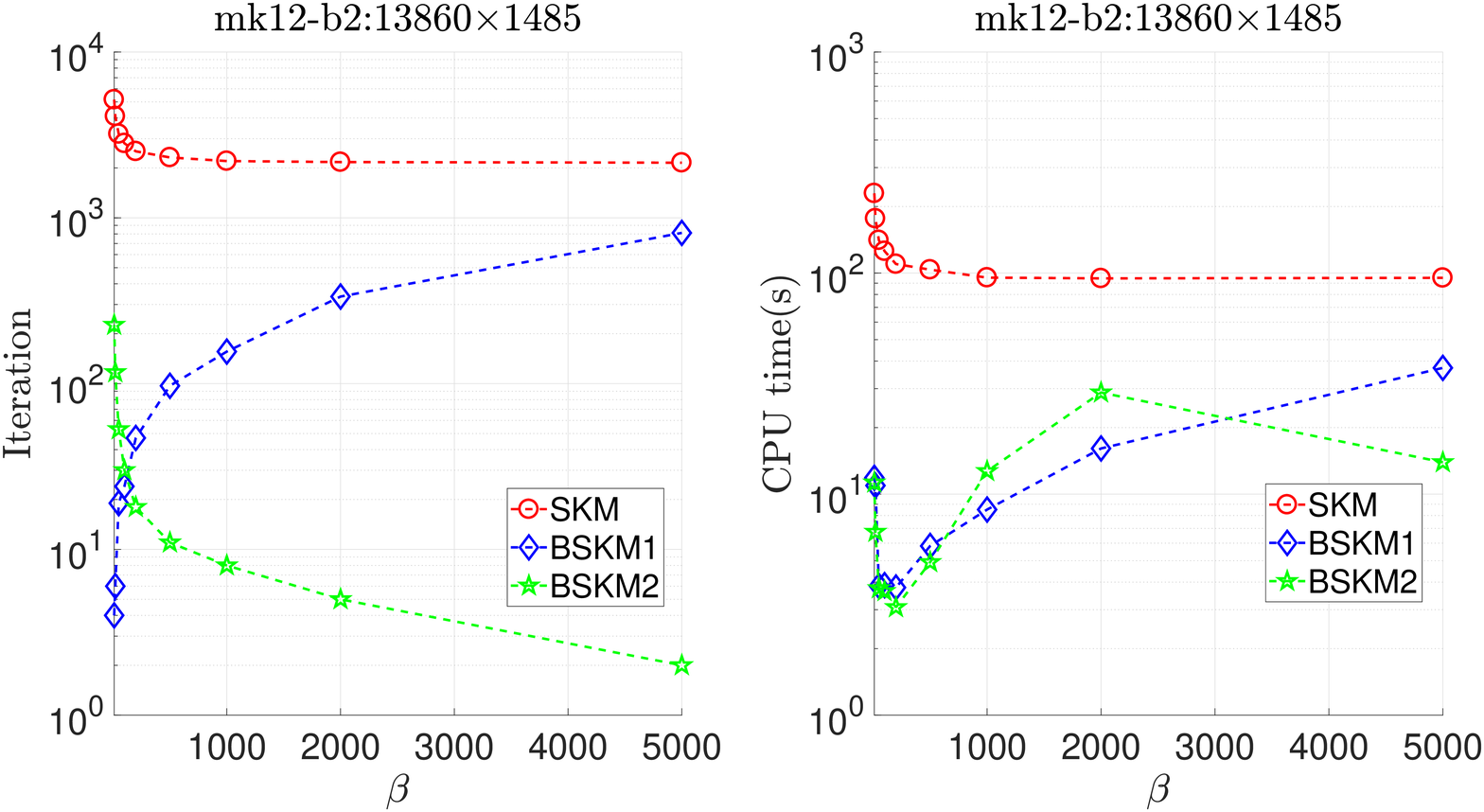}
 \end{center}
\caption{Iteration and CPU time(s) versus $\beta$ $(10\sim5000)$ for three methods with the sparse matrix \textbf{mk12-b2}.  }\label{fig11}
\end{figure}

\begin{figure}[ht]
 \begin{center}
 \includegraphics [height=4.5cm,width=10cm]{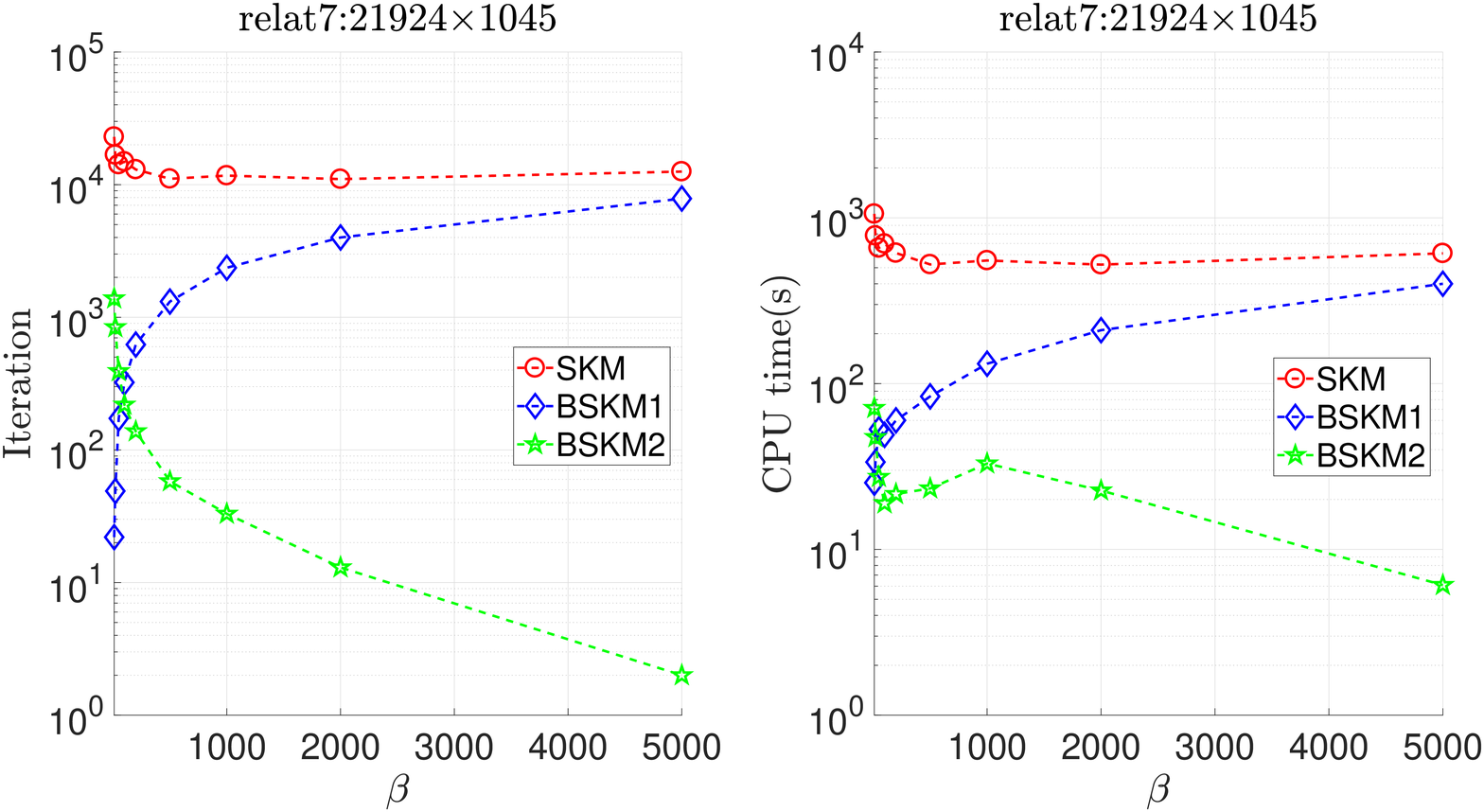}
 \end{center}
\caption{Iteration and CPU time(s) versus $\beta$ $(10\sim5000)$ for three methods with the sparse matrix \textbf{relat7}.  }\label{fig12}
\end{figure}

For the first class of matrices, that is, the matrices generated randomly, the numerical results of the three methods are presented in Figs. \ref{fig1}--\ref{fig6}. Figs. \ref{fig1}--\ref{fig4} show that, with different values of $\beta$, the number of iterative steps and computing time of our two methods are less than those of the SKM method. From Figs. \ref{fig5}--\ref{fig6}, we find that 
the BSKM1 and BSKM2 methods vastly outperform the SKM method in terms of the iterations and computing time when the problems are large-scale.

For the second class of matrices, that is, the sparse matrices from \cite{Davis2011}, we plot the numerical results on Iteration and CPU time(s) versus $\beta$ in Figs. \ref{fig7}--\ref{fig12}. From these figures, we find that the similar results shown in Figs. \ref{fig1}--\ref{fig4}. That is, the BSKM1 and BSKM2 methods converge faster and need less runtime for the same accuracy.

Therefore, in all the cases, our block sampling Kaczmarz-Motzkin methods, i.e., BSKM1 and BSKM2 methods, outperform the SKM method. This is mainly because the latter only updates one index in each iteration while the former 
enforces multiple greedy indices simultaneously.


%
%


\clearpage

\end{document}